\documentclass[regno]{amsart}
\usepackage{eurosym}
\usepackage{latexsym,amsmath,amssymb,amscd}
\usepackage[all]{xy}
\usepackage{tikz-cd}
\usepackage{enumerate}
\usepackage{hyperref}
\usepackage{mathrsfs}
\usepackage{amsfonts}
\usepackage{setspace}
\usepackage{mathtools}

\makeatletter
\@addtoreset{figure}{section}
\def\thefigure{\thesection.\@arabic\c@figure}
\def\fps@figure{h,t}
\@addtoreset{table}{bsection}
\def\thetable{\thesection.\@arabic\c@table}
\def\fps@table{h, t}
\@addtoreset{equation}{section}

\makeatother

\newtheorem{theorem}{Theorem}
\newtheorem{corollary}[theorem]{Corollary}
\newtheorem{definition}[theorem]{Definition}
\newtheorem{example}[theorem]{Example}

\newtheorem{lemma}[theorem]{Lemma}

\newtheorem{proposition}[theorem]{Proposition}
\newtheorem{remark}[theorem]{Remark}
\newtheorem{construction}[theorem]{Construction}

\numberwithin{theorem}{section}
\numberwithin{equation}{section}

\email{maria.joita@upb.ro and mjoita@fmi.unibuc.ro }
\urladdr{http://sites.google.com/a/g.unibuc.ro/maria-joita}
\email{ionutsimon.gh@gmail.com}
\subjclass[2020]{46L05; 46L07; 46L10; 47L25}
\keywords{ locally $C^{\ast}$-algebras, local boundary representation, quantized domain, Shilov ideal}
\begin{document}
\title[Shilov boundary ideal]{On the Shilov boundary ideal for Fr\'{e}chet
local operator systems}
\author[1]{Maria Joi\c ta}
\address{Department of Mathematics, Faculty of Applied Sciences, University
Politehnica of Bucharest, 313 Spl. Independentei, 060042, Bucharest, Romania}
\author[2]{Gheorghe-Ionu\c{t} \c{S}imon}
\address{Department of Mathematics, Faculty of Applied Sciences, University
Politehnica of Bucharest, 313 Spl. Independentei, 060042, Bucharest, Romania 
}

\begin{abstract}
We show that the Shilov boundary ideal for a separable Fr\'{e}chet local
operator system is given by the intersection of the kernels of all its $%
\Gamma$-boundary representations.
\end{abstract}

\maketitle

\section{Introduction}

Locally $C^*$-algebras are generalizations of the $C^*$-algebras where the
topology on a locally $C^*$-algebra is defined by a separating family of $%
C^* $-seminorms instead of a $C^*$-norm. They are also known in the
literature as $LMC^*$-algebras, $b^*$-algebras, or pro-$C^*$-algebras. The
term locally $C^*$-algebra was introduced by A. Inoue \cite{In72} in the
early 1970s to address problems in functional analysis and quantum physics
where norm-boundedness fails, but local $C^*$-type behaviour persists. We
refer the reader to the book of Fragoulopoulou \cite{Fr05} for a historical
review of the theory, which goes back to 1952. Many concepts and results
from the theory of $C^*$-algebras have been extended in the locally convex
setting. One such concept is that of an operator space.

Effros and Webster \cite{EW97} initiated the study of locally convex
analogues of operator spaces, known as local operator spaces. In 2008, A.
Dosiev \cite{Do08} provided a concrete realization of local operator spaces
as subspaces of the locally $C^*$-algebra $C^*_{\mathcal{E}}(\mathcal{D})$
of unbounded operators on a quantized domain $\lbrace \mathcal{H};\mathcal{E}%
; \mathcal{D} \rbrace$, thereby generalizing Ruan's representation theorem
for operator spaces (see, e.g. \cite[Theorem 2.3.5]{ER00}). In some sense,
the locally $C^*$-algebra $C^*_{\mathcal{E}}(\mathcal{D})$ plays the role of 
$B(\mathcal{H})$ from the theory of $C^*$-algebras. A representation of a
locally $C^*$-algebra $\mathcal{A}$ on a quantized domain $\lbrace \mathcal{H%
}; \mathcal{E}; \mathcal{D} \rbrace$ is a continuous $*$-homomorphism from $%
\mathcal{A}$ to $C^*_{\mathcal{E}}(\mathcal{D})$. A local operator system is
a unital self-adjoint subspace of a locally $C^*$-algebra. In \cite{Ark22},
Arunkumar introduced the notion of a local boundary representation for a
local operator system as a locally convex analogue of boundary
representations for operator systems. As pointed out in \cite{Jo23}, the
study initiated by Arunkumar was shown to reduce, in fact, to the study of
representations on Hilbert spaces. In \cite{Jo23}, \cite{Jo25} the notion of
a local boundary representation on Hilbert spaces was introduced for local
operator systems, and several of its fundamental properties were
investigated. In the same work, the concept of the Shilov boundary ideal for
a local operator system was developed, and it was proved that, for a
separable local operator system, the Shilov boundary ideal exists and is
given by the intersection of the kernels of all $p_{\lambda}$-boundary
representations for the local operator system.

Building on the results of \cite{Simon}, we introduce the notion of a $%
\Gamma $-boundary representation for a local operator system and prove that
the Shilov boundary ideal for a separable Fr\'{e}chet local operator system $%
\mathcal{S}$ is given by the intersection of the kernels of all $\Gamma$%
-boundary representations of $\mathcal{S}$ (see Theorem \ref{Theorem}).
Furthermore, we present an explicit example of a Shilov boundary ideal for a
separable Fr\'{e}chet local operator system within this framework.

\section{Preliminaries}

\subsection{Locally $C^{\ast }$-algebras}

Let $\mathcal{A}$ be a $\ast $-algebra with unit, denoted by $1_{\mathcal{A}
} $. A seminorm $p$ on $\mathcal{A}$ is called \textit{sub-multiplicative}
if $p(1_{\mathcal{A}})=1$ and $p(ab)\leq p(a)p(b)$ for all $a,b\in \mathcal{%
A }$. A sub-multiplicative seminorm $p$ on $\mathcal{A}$ is called a \textit{%
$C^{\ast}$-seminorm} if $p(a^{\ast }a)=p(a)^{2}$ for all $a\in \mathcal{A}.$

Let $\left( \Lambda ,\leq \right) $ be a directed poset and let $\lbrace
p_{\lambda }\rbrace_{\lambda \in \Lambda }$ be a family of $C^{\ast}$
-seminorms defined on some $\ast $-algebra $\mathcal{A}$. We say that $%
\lbrace p_{\lambda }\rbrace_{\lambda \in \Lambda }$ is an \textit{upward
filtered family} of $C^{\ast}$-seminorms if $p_{\lambda _{1}}(a)\leq
p_{\lambda _{2}}(a)\ $ for all $\ a\in \mathcal{A}\ $ whenever $\ \lambda
_{1}\leq \lambda _{2}\ $in$\ \Lambda $.

A \textit{locally $C^*$-algebra} $\mathcal{A}$ is a complete Hausdorff
topological $*$-algebra over $\mathbb{C}$ whose topology is determined by a
family of continuous $C^*$-seminorms $\lbrace p_{\lambda}
\rbrace_{\lambda\in\Lambda}$. 

If the family of $C^*$-seminorms defining the topology on a locally $C^*$%
-algebra is countable, we say that it is a Fr\'{e}chet locally $C^*$%
-algebra. Moreover, the topology on a Fr\'{e}chet locally $C^*$-algebra is
unique.

Morphisms between locally $C^*$-alegbras are continuous $*$-homomorphisms.
Note that in contrast to the theory of $C^*$-algebras, $*$-homomorphisms of
locally $C^*$-algebras might not be continuous. However, $*$-homomorphisms
between Fr\'{e}chet locally $C^*$-algebras are automatically continuous \cite%
[Theorem 5.2]{Ph88}. Any $C^*$-algebra is a locally $C^*$-algebra in the
above sense. \medskip

An element $a\in \mathcal{A}$ is called \textit{local self-adjoint} if $%
a=a^{\ast}+c$ for some $c\in \mathcal{A}$ with $p_{\lambda }\left( c\right)
=0$ for some $\lambda \in \Lambda ,$ and we call $a$ as $\lambda $
-self-adjoint, and \textit{local positive} if $a=b^{\ast}b+c$ where $b,c\in 
\mathcal{A}$ and $p_{\lambda }\left( c\right) =0$ for some $\lambda \in
\Lambda $; we call $a$ as $\lambda $-positive and write $a\geq _{\lambda }0$
. We write $a=_{\lambda }0$ whenever $p_{\lambda }\left( a\right) =0$.

Note that an element $a\in \mathcal{A}$ is self-adjoint if and only if $a$
is $\lambda $-self-adjoint for all $\lambda \in \Lambda \ $and $a$ is
positive if and only if $a$ is $\lambda $-positive for all $\lambda \in
\Lambda .$

\subsection{Local completely positive maps}

Let $\mathcal{A}$ and $\mathcal{B}$ be two unital Fr\'{e}chet locally $%
C^{\ast}$-algebras whose topologies are defined by the families of $C^{\ast}$%
-seminorms $\lbrace p_{m }\rbrace_{m\geq 1}$ and $\lbrace
q_{l}\rbrace_{l\geq 1 }$, respectively. For each $n\in\mathbb{N}, \ M_{n}(%
\mathcal{A})$ denotes the set of all $n\times n$ matrices over $\mathcal{A}$%
. Note that $M_{n}(\mathcal{A})$ is a unital Fr\'{e}chet locally $C^{\ast}$
-algebra with the associated family of $C^{\ast}$-seminorms $\lbrace p_{m
}^{n}\rbrace_{m\geq 1 }$.

Let $\varphi :\mathcal{A}\rightarrow \mathcal{B}$ be a linear map. We say
that $\varphi $ is \textit{local completely positive} if for each $l\geq 1$,
there exists $m\geq 1$ such that $[\varphi (a_{ij})]\geq _{l}0$ whenever $%
[a_{ij}]\geq _{m}0$ and $[\varphi (a_{ij})]=_{l}0$ if $[a_{ij}]=_{m}0$, for
all $n\in \mathbb{N}.$

A local completely positive map $\varphi :\mathcal{A}\rightarrow \mathcal{B} 
$ is \textit{local completely isometric} if 
\begin{equation*}
q_{m}^{n}\left( [\varphi (a_{ij})]_{i,j=1}^{n}\right)
=p_{m}^{n}([a_{ij}]_{i,j=1}^{n})
\end{equation*}%
for all $[a_{ij}]_{i,j=1}^{n}\in M_{n}(\mathcal{A})$, for all $n\geq 1$ and
for all $m\geq 1.$

Clearly, if $\varphi :\mathcal{A}\rightarrow \mathcal{B}$ is a local
isometric $\ast $-homomorphism, then $\varphi $ is a local completely
isometric map.

A \textit{local operator system} is a self-adjoint subspace $\mathcal{S}$ of
a unital locally $C^{\ast }$-algebra $\mathcal{A}$ which contains the unit
of $\mathcal{A}$. If $\varphi :\mathcal{S}_{1}\rightarrow \mathcal{S}_{2}$
is a local completely positive map, then $\varphi (s^{\ast })=\varphi
(s)^{\ast },$ and so $\varphi (\mathcal{S}_{1})$ is a local operator system.


\subsection{The unbounded analogue of the Gelfand-Naimark theorem}

In this subsection, we recall some notions about the algebra of all linear
operators on quantized domains in a Hilbert space from \cite{Do08} and \cite%
{Do12}, denoted by $C^*_{\mathcal{E}}(\mathcal{D})$. This algebra is
considered the natural analogue of $B(\mathcal{H})$ in the local operator
space theory. However, it should be noted that this algebra works
differently than $B(\mathcal{H})$ in the operator space theory, and more
careful consideration should be given. \medskip

Let $\left( \Omega, \leq \right)$ be a directed poset. A \textit{quantized
domain} in a Hilbert space $\mathcal{H}$ is a triple $\left\lbrace \mathcal{H%
};\mathcal{E};\mathcal{D} \right\rbrace$, where $\mathcal{E}:=\left\lbrace 
\mathcal{H}_{\iota}: \iota\in\Omega \right\rbrace$ is an upward filtered
family of closed subspaces such that the union space $\mathcal{D}%
:=\bigcup\limits_{\iota\in\Omega}\mathcal{H}_{\iota}$ is dense in $\mathcal{H%
}$. A quantized domain $\mathcal{E}$ is called a \textit{Fr\'{e}chet
quantized domain} if $\mathcal{E}$ is a countable family.

Note that the quantized family $\mathcal{E}:=\left\lbrace \mathcal{H}%
_{\iota}: \iota\in\Omega \right\rbrace$ determines an upward filtered family 
$\left\lbrace P_{\iota}: \iota\in\Omega \right\rbrace$ of projections in $B(%
\mathcal{H})$, where $P_{\iota}$ is the orthogonal projection of $\mathcal{H}
$ onto the closed subspace $\mathcal{H}_{\iota}$. If $\left\lbrace
P_{\iota}: \iota\in\Omega \right\rbrace$ consists of mutually commuting
projections, then we say that $\mathcal{E}$ is a \textit{commutative domain}
in $\mathcal{H}$. Any Fr\'{e}chet quantized domain is a commutative domain.

Let $\left\{ \mathcal{H},\mathcal{E},\mathcal{D}\right\} $ be a quantized
domain. The set

\begin{equation*}
\begin{aligned} C_{\mathcal{E}}^{*}(\mathcal{D}) = \{\, T \in L(\mathcal{D})
\mid\,& T(\mathcal{H}_{\iota}) \subseteq \mathcal{H}_{\iota}, \;
T(\mathcal{H}_{\iota}^{\perp} \cap \mathcal{D}) \subseteq
\mathcal{H}_{\iota}^{\perp} \cap \mathcal{D}, \\ & T|_{\mathcal{H}_{\iota}}
\in B(\mathcal{H}_{\iota}), \ \forall\, \iota \in \Omega \,\}, \end{aligned}
\end{equation*}%
where $L(\mathcal{D})$ denotes the set of all linear operators on $\mathcal{D%
}$, is a locally $C^{\ast }$-algebra with the involution 
\begin{equation*}
T^{\ast }:=T^{\bigstar }\upharpoonright _{\mathcal{D}}\in C_{\mathcal{E}%
}^{\ast }(\mathcal{D}),
\end{equation*}%
and the topology induced by the family of $C^{\ast }$-seminorms $\{\Vert
\cdot \Vert _{\iota }\}_{\iota \in \Omega }$, where 
\begin{equation*}
\Vert T\Vert _{\iota }:=\Vert T\upharpoonright _{\mathcal{H}_{\iota }}\Vert
_{B(\mathcal{H}_{\iota })}.
\end{equation*}

Let $C^*_{\mathcal{E}}(\mathcal{D})$ be a Fr\'{e}chet locally $C^*$-algebra,
and let $\lbrace P_{m} \rbrace_{m\geq 1}$ be the projection sequence
associated to $\mathcal{E}$. We denote by $S_{m}:=\left( I-P_{m-1}
\right)P_{m}$ the projection onto the subspace $\mathcal{H}_{m-1}^{\perp}\cap%
\mathcal{H}_{m},\ m\geq 1$, where for $m=1$ we set $S_{1}=P_{1}$. If $T\in
C^*_{\mathcal{E}}(\mathcal{D})$, then it has a diagonal representation $%
T=\sum\limits_{k=1}^{\infty}S_{k}TS_{k}$.

Let $\mathcal{A}$ be a unital locally $C^*$-algebra $\mathcal{A}$ with the
topology defined by the family of $C^*$-seminorms $\lbrace p_{\lambda}
\rbrace_{\lambda\in\Lambda}$. A \textit{local representation} of $\mathcal{A}
$ on a quantized domain $\lbrace \mathcal{H};\mathcal{E};\mathcal{D} \rbrace$
with $\mathcal{E}=\lbrace \mathcal{H}_{\iota} \rbrace_{\iota\in\Omega}$ is a 
$*$-homomorphism $\pi:\mathcal{A}\rightarrow C^*_{\mathcal{E}}(\mathcal{D})$
with the property that for each $\iota\in\Omega$, there exist $%
\lambda\in\lambda$ such that $\Vert \pi(a) \Vert_{\iota}\leq p_{\lambda}(a)$
for all $a\in\mathcal{A}$. If $\Lambda=\Omega$ and for all $%
\lambda\in\Lambda $ we have $\Vert \pi(a) \Vert_{\lambda}=p_{\lambda}(a)$,
for all $a\in\mathcal{A}$, we say that $\pi$ is a \textit{local isometric
representation}.

\begin{theorem}[\protect\cite{Do08}, \protect\cite{Do12}]
For any locally $C^*$-algebra $\mathcal{A}$, there exist a commutative
domain $\lbrace \mathcal{H};\mathcal{E};\mathcal{D} \rbrace$ and a local
isometric $*$-homomorphism $\pi:\mathcal{A}\rightarrow C^*_{\mathcal{E}}(%
\mathcal{D}).$
\end{theorem}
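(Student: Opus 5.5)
The plan is to realize $\mathcal{A}$ as an inverse limit of $C^{\ast}$-algebras, represent each quotient faithfully by Gelfand--Naimark, and glue the resulting Hilbert spaces into an increasing family indexed by $\Lambda$. For each $\lambda\in\Lambda$ let $N_{\lambda}=\{a\in\mathcal{A}: p_{\lambda}(a)=0\}$. This is a closed two-sided $\ast$-ideal, and the completion $\mathcal{A}_{\lambda}$ of $\mathcal{A}/N_{\lambda}$ with respect to the norm induced by $p_{\lambda}$ is a $C^{\ast}$-algebra. Since the family is upward filtered, for $\lambda\le\mu$ there are surjective $\ast$-homomorphisms $\pi_{\lambda\mu}:\mathcal{A}_{\mu}\to\mathcal{A}_{\lambda}$, and completeness of $\mathcal{A}$ identifies it with $\varprojlim\mathcal{A}_{\lambda}$. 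We may assume $\{p_\lambda\}$ is upward filtered, replacing it by finite maxima if necessary; maxima of $C^{\ast}$-seminorms are again $C^{\ast}$-seminorms.

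Next we build the quantized domain. Let $S(\mathcal{A}_{\lambda})$ be the state space. Composition with $\pi_{\lambda\mu}$ embeds $S(\mathcal{A}_{\lambda})$ into $S(\mathcal{A}_{\mu})$, so the sets $S_{\lambda}:=S(\mathcal{A}_{\lambda})$, viewed as subsets of the continuous states of $\mathcal{A}$, increase with $\lambda$. We set
\begin{equation*}
\mathcal{H}=\bigoplus_{\varphi\in\bigcup_{\lambda}S_{\lambda}}\mathcal{H}_{\varphi},\qquad \mathcal{H}_{\lambda}=\bigoplus_{\varphi\in S_{\lambda}}\mathcal{H}_{\varphi},
\end{equation*}
where $(\pi_{\varphi},\mathcal{H}_{\varphi})$ is the GNS representation of $\mathcal{A}$ associated with $\varphi$. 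This representation factors through $\mathcal{A}_{\lambda}$ whenever $\varphi\in S_{\lambda}$, so $\Vert\pi_{\varphi}(a)\Vert\le p_{\lambda}(a)$. The subspaces $\mathcal{H}_{\lambda}$ are closed, increasing, and have dense union $\mathcal{D}$. Their projections are projections onto sub-direct sums of a common orthogonal decomposition, so they commute and the domain is commutative.

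Define $\pi(a)$ on $\mathcal{D}$ by acting as $\pi_{\varphi}(a)$ on each summand. Then $\pi(a)$ leaves each $\mathcal{H}_{\lambda}$ invariant, and it also leaves $\mathcal{H}_{\lambda}^{\perp}\cap\mathcal{D}$ invariant, since that space is a sum of summands. Its restriction to $\mathcal{H}_{\lambda}$ is the direct sum $\bigoplus_{\varphi\in S_{\lambda}}\pi_{\varphi}$, which is the universal representation of $\mathcal{A}_{\lambda}$ composed with the quotient map. This restriction is bounded with norm at most $p_{\lambda}(a)$. Hence $\pi(a)\in C^{\ast}_{\mathcal{E}}(\mathcal{D})$. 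Multiplicativity and linearity hold summandwise. The involution is compatible because $\pi(a)^{\bigstar}$ restricted to $\mathcal{D}$ is $\pi(a^{\ast})$, which can be checked on each summand.

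The key step, and the main obstacle, is the isometry $\Vert\pi(a)\Vert_{\lambda}=p_{\lambda}(a)$. The restriction to $\mathcal{H}_{\lambda}$ is the universal representation of $\mathcal{A}_{\lambda}$, and this is faithful, hence isometric, by the classical Gelfand--Naimark theorem. This gives $\Vert\pi(a)\Vert_{\lambda}=\Vert a+N_{\lambda}\Vert_{\mathcal{A}_{\lambda}}=p_{\lambda}(a)$. The delicate point is bookkeeping: the states in $S_{\lambda}$ must be identified consistently across different $\lambda$, so that $\mathcal{H}_{\lambda}\subseteq\mathcal{H}_{\mu}$ is a genuine inclusion of summands rather than merely an isometric embedding. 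Indexing all summands by continuous states of $\mathcal{A}$ itself handles this, since a state of $\mathcal{A}_{\lambda}$ pulled back to $\mathcal{A}$ is the same functional regardless of $\mu$. Finally, injectivity of $\pi$ follows from the isometry and the Hausdorff property, so $\pi$ is a local isometric $\ast$-homomorphism.
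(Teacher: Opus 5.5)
The paper does not prove this theorem; it is quoted from Dosiev's work, so there is no internal argument to compare with. Your proof is correct. It is the GNS proof of Gelfand--Naimark run simultaneously for all the quotients $\mathcal{A}_{\lambda}$. The states are pulled back to $\mathcal{A}$ so that they nest, and you verify every requirement in the definition of $C^{\ast}_{\mathcal{E}}(\mathcal{D})$: invariance of $\mathcal{H}_{\lambda}$ and of $\mathcal{H}_{\lambda}^{\perp}\cap\mathcal{D}$, boundedness on each $\mathcal{H}_{\lambda}$, and compatibility with the involution.

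Two routine points are left implicit.
\begin{enumerate}
\item $\mathcal{D}$ is dense. Each $\varphi$ lies in some $S_{\lambda}$, and by directedness finitely many summands lie in a common $\mathcal{H}_{\lambda}$. Hence all finitely supported vectors belong to $\mathcal{D}$.
\item Write $q_{\lambda}:\mathcal{A}\to\mathcal{A}_{\lambda}$ for the canonical map. The GNS representation of $\psi\circ q_{\lambda}$ on $\mathcal{A}$ is unitarily equivalent to $\pi_{\psi}\circ q_{\lambda}$. This holds because $q_{\lambda}(\mathcal{A})$ is dense in $\mathcal{A}_{\lambda}$, so the cyclic vector stays cyclic. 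This is what makes $\mathcal{H}_{\lambda}$ literally carry the universal representation of $\mathcal{A}_{\lambda}$, so that Gelfand--Naimark gives $\Vert\pi(a)\Vert_{\lambda}=p_{\lambda}(a)$.
\end{enumerate}

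A slightly different route avoids the bookkeeping with states. Fix any faithful representation $\rho_{\mu}:\mathcal{A}_{\mu}\to B(\mathcal{L}_{\mu})$ for each $\mu\in\Lambda$. Put $\mathcal{H}=\bigoplus_{\mu\in\Lambda}\mathcal{L}_{\mu}$ and $\mathcal{H}_{\lambda}=\bigoplus_{\mu\leq\lambda}\mathcal{L}_{\mu}$, and let $\pi(a)$ act as $\rho_{\mu}(q_{\mu}(a))$ on $\mathcal{L}_{\mu}$. Then
$\Vert\pi(a)\Vert_{\lambda}=\sup_{\mu\leq\lambda}p_{\mu}(a)=p_{\lambda}(a)$
by monotonicity of the family. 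That variant trades your identification of pulled-back states for the monotonicity of the seminorms. Yours is choice-free and makes the nesting automatic.

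Finally, your construction never uses completeness of $\mathcal{A}$ or the identification $\mathcal{A}\cong\varprojlim\mathcal{A}_{\lambda}$. It works verbatim for any $\ast$-algebra with a directed separating family of $C^{\ast}$-seminorms, so that remark can be dropped.
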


\section{The Shilov boundary ideal for a Fr\'{e}chet local operator system}

Let $\mathcal{A}$ be a Fr\'{e}chet locally $C^*$-algebra whose topology is
defined by the family of $C^*$-seminorms $\lbrace p_{m} \rbrace_{m\geq 1}$.
Suppose that $\mathcal{I}$ is a closed two-sided $*$-ideal of $\mathcal{A}$.
Then the quotient $*$-algebra $\mathcal{A}/\mathcal{I}$ is a Fr\'{e}chet
locally $C^*$-algebra with respect to the family of $C^*$-seminorms $\lbrace 
\widehat{p_m} \rbrace_{m\geq 1}$, where $\widehat{p_{m}}(a+\mathcal{I}%
):=\inf\lbrace p_{m}(a+b)\mid b\in\mathcal{I} \rbrace,\ a\in\mathcal{A}$
(see \cite{Fr05}).

\subsection{Boundary ideal for a Fr\'{e}chet local operator system}

Let $\mathcal{S}\subseteq C^*_{\mathcal{E}}(\mathcal{D})$ be a Fr\'{e}chel
local operator system and let $\mathcal{A}$ be the Fr\'{e}chet locally $C^*$%
-algebra generated by $\mathcal{S}$

\begin{definition}
A closed two-sided $*$-ideal $\mathcal{I}$ of $\mathcal{A}$ is called a 
\textit{boundary ideal for $\mathcal{S}$} if the canonical map $\sigma_{%
\mathcal{I}}:\mathcal{A}\rightarrow \mathcal{A}/\mathcal{I}$ is local
completely isometric on $\mathcal{S}$. A boundary ideal for $\mathcal{S}$ is
called \textit{the Shilov boundary ideal for $\mathcal{S}$} if it contains
every other boundary ideal.
\end{definition}

Let $\mathcal{S}_{0}$ be an operator system, and let $\mathcal{A}_{0}:=C^*(%
\mathcal{S}_{0})$ denote the unital $C^*$-algebra generated by $\mathcal{S}%
_{0}$.

Let $\lbrace X_{m};\ \iota_{mn}:X_{m}\hookrightarrow X_{n}, m\leq n \rbrace$
be an inductive system of compact Hausdorff spaces, and let $%
X=\varinjlim\limits_{m}X_{m}$ denote its inductive limit, endowed with the
corresponding inductive limit topology. Then $C\left( X,\mathcal{A}_{0}
\right):=\lbrace f:X\rightarrow\mathcal{A}_{0}\mid f\ \mathit{is continuous}
\rbrace$ is a unital Fr\'{e}chet locally $C^*$-algebra with respect to the
topology given by the family of $C^*$-seminorms $\lbrace p_{m}
\rbrace_{m\geq 1}$ , where $p_{m}(f):=\sup\lbrace \Vert f(x) \Vert_{\mathcal{%
A}_{0}}\mid x\in X_{m} \rbrace$. Then $S:=\lbrace f\in C\left( X,\mathcal{A}%
_{0} \right)\mid f(x)\in\mathcal{S}_{0}, (\forall)\ x\in X \rbrace$ is a
local operator system, and $C^*(\mathcal{S})=C\left( X,\mathcal{A}_{0}
\right)$, since $C\left( X,\mathcal{A}_{0} \right)\cong C(X)\otimes\mathcal{A%
}_{0}$ (see \cite[Section 3]{Ph88} and the discussion in \cite{BhKa91}).

\begin{proposition}
\label{PP} Let $\mathcal{I}_{0}\subseteq\mathcal{A}_{0}$ be the boundary
ideal for $\mathcal{S}_{0}$. Then $\mathcal{I}:=C\left( X,\mathcal{I}_{0}
\right)$ is a boundary ideal for $\mathcal{S}$.
\end{proposition}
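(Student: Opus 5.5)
We must show three things: $\mathcal{I}=C(X,\mathcal{I}_0)$ is a closed two-sided $*$-ideal of $\mathcal{A}=C(X,\mathcal{A}_0)$; the quotient seminorms on $\mathcal{A}/\mathcal{I}$ have an explicit pointwise form; and the canonical map $\sigma_{\mathcal{I}}$ preserves all matricial seminorms on $\mathcal{S}$.

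\emph{The ideal.} Since $\mathcal{I}_0$ is a closed two-sided $*$-ideal of $\mathcal{A}_0$, the set $C(X,\mathcal{I}_0)$ is a two-sided $*$-ideal of $C(X,\mathcal{A}_0)$, because products and adjoints are taken pointwise. It is closed: if $f_\alpha\to f$ in the topology of $\{p_m\}$, then $f_\alpha(x)\to f(x)$ in norm for every $x\in X=\bigcup_m X_m$, and $\mathcal{I}_0$ is norm closed, so $f(x)\in\mathcal{I}_0$.

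\emph{The quotient seminorms.} Let $q_0:\mathcal{A}_0\to\mathcal{A}_0/\mathcal{I}_0$ be the quotient map. The key claim is that, for every $n,m$ and $F=[f_{ij}]\in M_n(\mathcal{A})$,
\begin{equation*}
\widehat{p_m}^{\,n}\bigl([f_{ij}+\mathcal{I}]\bigr)=\sup_{x\in X_m}\bigl\Vert [q_0(f_{ij}(x))]\bigr\Vert_{M_n(\mathcal{A}_0/\mathcal{I}_0)} .
\end{equation*}
First identify $M_n(\mathcal{A})/M_n(\mathcal{I})$ with $M_n(\mathcal{A}/\mathcal{I})$ and $M_n(C(X,\mathcal{A}_0))$ with $C(X,M_n(\mathcal{A}_0))$. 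Then the claim reduces to the case $n=1$ with $\mathcal{A}_0$ replaced by $M_n(\mathcal{A}_0)$ and $\mathcal{I}_0$ by $M_n(\mathcal{I}_0)$.

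The inequality $\geq$ is immediate: for $g\in\mathcal{I}$ and $x\in X_m$,
\begin{equation*}
\Vert f(x)+g(x)\Vert\geq \Vert q_0(f(x))\Vert .
\end{equation*}
The inequality $\leq$ is the main obstacle, since it requires approximate pointwise distance minimizers that depend continuously on $x$. I would avoid a direct construction. Instead, use that $p_m$ factors through restriction to $X_m$, so that $\widehat{p_m}$ equals the quotient norm of $C(X_m,\mathcal{A}_0)/\overline{\mathcal{I}|_{X_m}}$. Then use the standard $C^*$-algebra identification
\begin{equation*}
C(X_m,\mathcal{A}_0)/C(X_m,\mathcal{I}_0)\cong C(X_m,\mathcal{A}_0/\mathcal{I}_0).
\end{equation*}
This comes from $C(X_m)\otimes\mathcal{A}_0$ and exactness of the minimal tensor product with the nuclear algebra $C(X_m)$: the induced map is an injective $*$-homomorphism, hence isometric.

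Two points need checking in this step. The first is that restriction maps $\mathcal{I}$ onto a dense subset of $C(X_m,\mathcal{I}_0)$. This uses the Tietze-type extension for the inductive limit $X=\varinjlim X_m$, which is normal, applied to elementary tensors $\sum h_k\otimes a_k$ with $h_k\in C(X_m)$ and $a_k\in\mathcal{I}_0$. The second is that the kernel of $f\mapsto f|_{X_m}$ interacts correctly with the infimum defining $\widehat{p_m}$; this holds because $p_m$ only sees values on $X_m$.

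\emph{Isometry on $\mathcal{S}$.} For $[f_{ij}]\in M_n(\mathcal{S})$ and every $x$, we have $[f_{ij}(x)]\in M_n(\mathcal{S}_0)$. Since $\mathcal{I}_0$ is a boundary ideal, $q_0$ is completely isometric on $\mathcal{S}_0$, so
\begin{equation*}
\Vert [q_0(f_{ij}(x))]\Vert=\Vert [f_{ij}(x)]\Vert .
\end{equation*}
Taking the supremum over $x\in X_m$ and using the formula above gives
\begin{equation*}
\widehat{p_m}^{\,n}([\sigma_{\mathcal{I}}(f_{ij})])=p_m^n([f_{ij}])
\end{equation*}
for all $n,m$. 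Local complete positivity of $\sigma_{\mathcal{I}}$ holds because it is a continuous $*$-homomorphism with $\widehat{p_m}\leq p_m$. Hence $\sigma_{\mathcal{I}}$ is local completely isometric on $\mathcal{S}$, and $\mathcal{I}$ is a boundary ideal for $\mathcal{S}$.
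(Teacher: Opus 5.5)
Your proof is correct, but it takes a longer route than the paper's. The paper never computes the quotient seminorms; it argues by a squeeze. The upper bound $\widehat{p}_m^{\,n}([Q(f_{ij})])\leq p_m^n([f_{ij}])$ holds because $Q$ is a morphism. For the lower bound, the paper fixes $x_0\in X_m$ and notes that for every $[g_{ij}]\in M_n(\mathcal{I})$,
\[
p_m^n([f_{ij}]+[g_{ij}])\geq\Vert[f_{ij}(x_0)]+[g_{ij}(x_0)]\Vert\geq\Vert[f_{ij}(x_0)]\Vert ,
\]
where the last step holds because $[g_{ij}(x_0)]\in M_n(\mathcal{I}_0)$ and $\mathcal{I}_0$ is a boundary ideal. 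Taking the infimum over $[g_{ij}]$ and then the supremum over $x_0\in X_m$ finishes the argument.

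This is exactly your ``immediate'' inequality $\geq$, combined with the bound $\widehat{p_m}\leq p_m$ that you already invoke at the end for local complete positivity. On $M_n(\mathcal{S})$ these give $p_m^n\geq\widehat{p_m}^{\,n}\geq\sup_{x\in X_m}\Vert[q_0(f_{ij}(x))]\Vert=p_m^n$. So the step you flag as the main obstacle is never needed for this proposition. That step is the inequality $\leq$, with the Tietze extension on $X=\varinjlim X_m$, the density of $\mathcal{I}|_{X_m}$ in $C(X_m,\mathcal{I}_0)$, and the identification $C(X_m,\mathcal{A}_0)/C(X_m,\mathcal{I}_0)\cong C(X_m,\mathcal{A}_0/\mathcal{I}_0)$.

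Your detour is nonetheless sound. For the isometry you only need the induced map to be injective. That is clear, since the kernel of $h\mapsto q_0\circ h$ is visibly $C(X_m,\mathcal{I}_0)$; exactness of the tensor product only matters for surjectivity. The detour also buys more: an explicit pointwise formula for the seminorms of $M_n(\mathcal{A}/\mathcal{I})$ on all of $\mathcal{A}$, not just on $\mathcal{S}$. In addition, you verify that $\mathcal{I}$ is a closed two-sided $*$-ideal and that $\sigma_{\mathcal{I}}$ is local completely positive, both of which the paper leaves implicit.
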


\begin{proof}
We have to show that the map 
\begin{equation*}
Q:C\left( X,\mathcal{A}_{0} \right)\rightarrow C\left( X,\mathcal{A}_{0}
\right)/C\left( X,\mathcal{I}_{0} \right), f\mapsto f+C\left( X,\mathcal{I}%
_{0} \right)
\end{equation*}
is local completely isometric on $\mathcal{S}$. So, we have to prove that 
\begin{equation*}
\widehat{p}_{m}^{n}\left( [Q(f_{ij})] \right)=p_{m}^{n}\left( [f_{ij}]
\right), (\forall)\ [f_{ij}]\in M_{n}(\mathcal{S}), (\forall)\ m\geq 1,
(\forall)\ n\geq 1.
\end{equation*}

Since $Q$ is a morphism of locally $C^*$-algebras, we have that 
\begin{equation*}
\widehat{p}_{m}^{n}\left( [Q(f_{ij})] \right)\leq p_{m}^{n}\left( [f_{ij}]
\right), (\forall)\ [f_{ij}]\in M_{n}(\mathcal{S}), (\forall)\ m\geq 1,
(\forall)\ n\geq 1.
\end{equation*}

Conversely, let $[f_{ij}]\in M_{n}(\mathcal{S})$ and $[g_{ij}]\in M_{n}(%
\mathcal{I}), m\geq 1$. Then

\begin{align*}
p_{m}^{n}\!\left( [f_{ij}] + [g_{ij}] \right) &= \sup \Big\{ \big\| \lbrack
f_{ij}(x)] + [g_{ij}(x)] \big\|_{M_{n}(\mathcal{A}_{0})} \;\Big|\; x \in
X_{m} \Big\} \\
&\geq \big\| \lbrack f_{ij}(x)] + [g_{ij}(x)] \big\|_{M_{n}(\mathcal{A}%
_{0})}, \ (\forall)\, x \in X_{m}.
\end{align*}

Let $x_{0}\in X_{m}$. Then

\begin{align*}
\widehat{p}_{m}^{n}\!\left( [Q(f_{ij})] \right) &= \inf \bigl\{\,
p_{m}^{n}\!\left( [f_{ij}] + [g_{ij}] \right) \,\big|\, [g_{ij}] \in M_{n}(%
\mathcal{I}) \bigr\} \\[0.4em]
&\geq \inf \Bigl\{ \bigl\| \lbrack f_{ij}(x_{0})] + [g_{ij}(x_{0})] \bigr\|%
_{M_{n}(\mathcal{A}_{0})} \,\Big|\, [g_{ij}] \in M_{n}(\mathcal{I}) \Bigr\}
\\[0.4em]
\intertext{\qquad \qquad \qquad \qquad (since $\mathcal{I}_{0}$ is a
boundary ideal for $\mathcal{S}$)}
&\geq \inf \Bigl\{ \bigl\| \lbrack f_{ij}(x_{0})] + [c_{ij}] \bigr\|_{M_{n}(%
\mathcal{A}_{0})} \,\Big|\, [c_{ij}] \in M_{n}(\mathcal{I}_{0}) \Bigr\} \\
&=\Vert [f_{ij}(x_{0})] \Vert_{M_{n}(\mathcal{A}_{0})}.
\end{align*}

Therefore, 
\begin{equation*}
\widehat{p}_{m}^{n}\!\left( [Q(f_{ij})] \right)\geq\sup\lbrace \Vert
[f_{ij}(x)] \Vert_{M_{n}(\mathcal{A}_{0})}\mid x\in X_{m}
\rbrace=p_{m}^{n}\left( [f_{ij}] \right).
\end{equation*}
\end{proof}

\begin{remark}
\label{Rem} Let $\mathcal{S}_{0}$ be an operator system, and let $\mathcal{A}%
_{0}:=C^{\ast }(\mathcal{S}_{0})$ denote the unital $C^{\ast }$-algebra
generated by $\mathcal{S}_{0}$. Let $\mathcal{J}_{0}$ be the Shilov boundary
ideal for $\mathcal{S}_{0}$. Then $C\left( \mathbb{N},\mathcal{A}_{0}\right) 
$, the $\ast $-algebra of continuous functions from $\mathbb{N}$ into $%
\mathcal{A}_{0}$ (with $\mathbb{N}$ endowed with the discrete topology), is
a locally $C^{\ast }$-algebra with respect to the family of $C^{\ast }$%
-seminorms $\{q_{n}\}_{n\in \mathbb{N}}$, where $q_{n}(f)=\Vert f(n)\Vert $.
Moreover, $\mathcal{S}=C\left( \mathbb{N},\mathcal{S}_{0}\right) $ is a
local operator system. By Proposition \ref{PP}, $\mathcal{J}=C\left( \mathbb{%
N},\mathcal{J}_{0}\right) $ is a boundary ideal for $\mathcal{S}$.
Furthermore, $\mathcal{J}$ is the Shilov boundary ideal for $\mathcal{S}$.
\medskip

For this, let $\mathcal{I}\subseteq C\left( \mathbb{N}, \mathcal{A}_{0}
\right)$ be an arbitrary boundary ideal for $\mathcal{S}$. Fix $n_{0}\geq 1$%
, and define $\mathcal{I}_{0}:=\lbrace g(n_{0})\mid g\in\mathcal{I} \rbrace$%
. Then $\mathcal{I}_{n_{0}}$ is a closed two-sided $*$-ideal of $\mathcal{A}%
_{0}$. For each $a_{0}\in \mathcal{A}_{0}$, consider the constant map $%
f_{a_{0}}\in C(\mathbb{N}, \mathcal{A}_{0})$ defined by $f_{a_{0}}(n)=a_{0}$
for all $n\in\mathbb{N}$. Using the fact that $\mathcal{I}$ is a boundary
ideal for $\mathcal{S}$, one obtains, for every $[s_{ij}]\in M_{n}(\mathcal{S%
}_{0})$,

\begin{align*}
\big\| \lbrack s_{ij}] \big\|_{M_{n}(\mathcal{A}_{0})} &=
q_{n_{0}}^{n}\!\left( [f_{s_{ij}}] \right)= \widehat{q}_{n_{0}}^{n}\!\left(
[Q(f_{s_{ij}})] \right) \\
&= \inf \Big\{\, q_{n_{0}}^{n}\!\left( [f_{s_{ij}}] + [g_{ij}] \right) \ %
\Big|\ [g_{ij}] \in M_{n}(\mathcal{I}) \,\Big\} \\
&= \inf \Big\{\, \big\| \lbrack s_{ij}] + [g_{ij}(n_{0})] \big\|_{M_{n}(%
\mathcal{A}_{0})} \ \Big|\ [g_{ij}] \in M_{n}(\mathcal{I}) \,\Big\} \\
&= \inf \Big\{\, \big\| \lbrack s_{ij}] + [c_{ij}] \big\|_{M_{n}(\mathcal{A}%
_{0})} \ \Big|\ [c_{ij}] \in M_{n}(\mathcal{I}_{n_{0}}) \,\Big\} \\
&= \big\| \lbrack s_{ij}] \big\|_{M_{n}(\mathcal{A}_{0}/\mathcal{I}%
_{n_{0}})} .
\end{align*}

This shows that $\mathcal{I}_{n_{0}}$ is a boundary ideal for $\mathcal{S}%
_{0}$. Since $\mathcal{J}_{0}$ is the Shilov boundary ideal for $\mathcal{S}%
_{0}$, it follows that $\mathcal{I}_{n_{0}} \subseteq \mathcal{J}_{0}$ for
every $n_{0} \in \mathbb{N}$, and hence $\mathcal{I} \subseteq C\!\left( 
\mathbb{N}, \mathcal{J}_{0} \right) = \mathcal{J}$.
\end{remark}

\medskip

Determining the boundary representations of an operator system is, in
general, a difficult problem. Although abstract results guarantee their
existence, the boundary property is characterized by a delicate uniqueness
condition for completely positive extensions, which is highly sensitive to
dilation phenomena and to the irreducible representation theory of the
generated $C^*$-algebra. Moreover, boundary representations lack a geometric
characterization analogous to the commutative case, and their structure
depends intricately on how the operator system is embedded in its $C^*$%
-envelope. As a result, few classes of operator systems admit a complete and
accessible description of their boundary representations. An illustrative
example is provided by Argerami and Farenick \cite{ArFa13}, who determine
the boundary representations and the $C^*$-envelope of an irreducible
periodic weighted unilateral shift, highlighting both the subtlety of the
problem and the scarcity of explicit examples. More precisely, they prove
that the Shilov boundary ideal for the operator system $\mathcal{S}_{0}$
generated by an irreducible periodic unilateral weighted shift $W\in B\left(
l^2(\mathbb{N}) \right)$ with period $p$ is $K\left( l^2(\mathbb{N}) \right)$%
, the $C^*$-algebra of compact operators acting on $l^2(\mathbb{N})$.
\medskip

\begin{corollary}
Let $\mathcal{S}_{0}$ be the operator system generated by the irreducible
periodic unilateral weighted shift with period $p$, and $\mathcal{I}%
_{0}=K\left( l^2(\mathbb{N}) \right)$ denote the Shilov boundary ideal for $%
\mathcal{S}_{0}$ (see \cite{ArFa13}), then $C\left( X, K\left( l^2(\mathbb{N}%
) \right) \right)$ is a boundary ideal for $C(X, \mathcal{S}_{0})$.
\end{corollary}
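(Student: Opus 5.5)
The corollary follows directly from Proposition \ref{PP} with $\mathcal{S}_{0}$ the operator system generated by the irreducible periodic unilateral weighted shift $W$ of period $p$, and $\mathcal{A}_{0}=C^{\ast}(\mathcal{S}_{0})\subseteq B(l^{2}(\mathbb{N}))$. First I check the setting. Argerami and Farenick \cite{ArFa13} show that the Shilov boundary ideal of $\mathcal{S}_{0}$ is $K(l^{2}(\mathbb{N}))$. This requires $K(l^{2}(\mathbb{N}))\subseteq\mathcal{A}_{0}$, which holds because $W$ is irreducible and $\mathcal{A}_{0}$ contains the nonzero compact operator $I-W^{\ast}W$ or a suitable commutator. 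Hence $\mathcal{I}_{0}=K(l^{2}(\mathbb{N}))$ is a closed two-sided $\ast$-ideal of $\mathcal{A}_{0}$. Being the Shilov boundary ideal, it is in particular a boundary ideal: the quotient map $\mathcal{A}_{0}\to\mathcal{A}_{0}/\mathcal{I}_{0}$ is completely isometric on $\mathcal{S}_{0}$.

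Next, with $X=\varinjlim X_{m}$ as above, $C(X,\mathcal{A}_{0})$ is a Fr\'{e}chet locally $C^{\ast}$-algebra and $C(X,\mathcal{S}_{0})=\mathcal{S}$ is a local operator system with $C^{\ast}(\mathcal{S})=C(X,\mathcal{A}_{0})$, as recorded before Proposition \ref{PP}. I also need $C(X,\mathcal{I}_{0})$ to be a closed two-sided $\ast$-ideal of $C(X,\mathcal{A}_{0})$. The ideal and $\ast$-properties hold pointwise. For closedness, a limit in the $p_{m}$-seminorms converges uniformly on each $X_{m}$, hence pointwise, and $\mathcal{I}_{0}$ is norm closed, so the limit still takes values in $\mathcal{I}_{0}$.

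Finally, Proposition \ref{PP} applied to $\mathcal{I}_{0}=K(l^{2}(\mathbb{N}))$ gives that the canonical map $C(X,\mathcal{A}_{0})\to C(X,\mathcal{A}_{0})/C(X,K(l^{2}(\mathbb{N})))$ is local completely isometric on $C(X,\mathcal{S}_{0})$. That is exactly the claim.

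The only step needing care is the first one: confirming that the hypothesis of Proposition \ref{PP}, namely that $\mathcal{I}_{0}$ is a boundary ideal of $\mathcal{A}_{0}$, is satisfied. This amounts to reading \cite{ArFa13} as saying $K(l^{2}(\mathbb{N}))\subseteq C^{\ast}(\mathcal{S}_{0})$ and that this ideal is the Shilov boundary ideal. Once that is granted, everything else is routine bookkeeping.
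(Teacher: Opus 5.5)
Your proposal is correct and follows the paper's argument: the corollary is Proposition~\ref{PP} applied with $\mathcal{I}_{0}=K(l^{2}(\mathbb{N}))$, and its (Shilov) boundary-ideal property, including $K(l^{2}(\mathbb{N}))\subseteq C^{\ast}(\mathcal{S}_{0})$, is simply taken from Argerami--Farenick. One side remark is wrong, though harmless because that inclusion is already part of the cited result: $I-W^{\ast}W$ is a periodic diagonal operator, so it is never a nonzero compact, and $W^{\ast}W-WW^{\ast}$ is compact only when the weights have constant modulus; a valid nonzero finite-rank witness is $(W^{p})^{\ast}W^{p}-W^{p}(W^{p})^{\ast}=|w_{1}\cdots w_{p}|^{2}P$, where $P$ is the projection onto the span of the first $p$ basis vectors.
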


If $X$ is a compact Hausdorff space, then $C(X, \mathcal{S}_{0})$ is an
operator system contained in the $C^*$-algebra $C\left( X, C^*(\mathcal{S}%
_{0}) \right)$ of continuous functions from $X$ into the $C^*$-algebra
generated by $\mathcal{S}_{0}$. Moreover, $C\left( X, K\left( l^2(\mathbb{N}%
) \right) \right)$ is a boundary ideal for $C(X, \mathcal{S}_{0})$. However,
it is not known whether this ideal is the Shilov boundary ideal for $C\left(
X, \mathcal{S}_{0} \right)$. \medskip

\begin{example}
We apply Remark \ref{Rem} to the case where $\mathcal{S}_{0}$ is the
operator system generated by the irreducible periodic unilateral weighted
shift with period $p$. In this situation, the local operator system $%
\mathcal{S}:=\lbrace \left( s_{n} \right)_{n\in\mathbb{N}}\mid s_{n}\in%
\mathcal{S}_{0} \rbrace\subseteq C^*_{\mathcal{E}}(\mathcal{D})$, where $%
\mathcal{D}=\bigcup\limits_{n\in\mathbb{N}}l^{2}(\mathbb{N})^{\oplus n}$,
has Shilov boundary ideal $\lbrace \left( T_{n} \right)_{n\in\mathbb{N}}\mid
T_{n}\in K(l^{2}(\mathbb{N}))\rbrace$.
\end{example}

\subsection{$\Gamma$-boundary representations}

Let $\mathcal{S}$ be a Fr\'{e}chet local operator system and $\mathcal{A}$
be the locally $C^*$-algebra generated by $\mathcal{S}$. We denote by $%
\Gamma:=\lbrace p_{m} \rbrace_{m\geq 1}$ the family of $C^*$-seminorms that
define the topology on $\mathcal{A}$.

\begin{definition}[\protect\cite{Jo25}, Definition 2.1]
Let $\varphi :\mathcal{S}\rightarrow B(\mathcal{H})$ be a linear map. If
there exists $m\geq 1$ such that $\varphi ^{(n)}([a_{ij}]_{i,j=1}^{n})$ is
positive in $B(\mathcal{H}^{\oplus n})$ whenever $[a_{ij}]_{i,j=1}^{n}$ is $m
$-positive and $\varphi ^{(n)}([a_{ij}]_{i,j=1}^{n})=0$ whenever $%
[a_{ij}]_{i,j=1}^{n}=_{m}0$, for all $n\geq 1$, we say that $\varphi $ is a $%
m$-$\mathcal{CP}$ (completely positive) map.
\end{definition}

\begin{definition}[\protect\cite{Jo25}, Definition 3.9]
A linear map $\pi:\mathcal{A}\rightarrow B(\mathcal{H})$ is a $p_{m}$%
-boundary representation for $\mathcal{S}$ if

\begin{itemize}
\item[(1)] $\pi$ is an irreducible representation;

\item[(2)] $\pi $ is the unique unital $m$-$\mathcal{CP}$ map that extends $%
\pi \upharpoonright _{\mathcal{S}}.$
\end{itemize}
\end{definition}

\begin{construction}
\label{pi_m} Let $\lbrace \mathcal{H};\mathcal{E};\mathcal{D} \rbrace$ be a
Fr\'{e}chet quantized domain and let $\varphi:\mathcal{A}\rightarrow C^*_{%
\mathcal{E}}(\mathcal{D})$ be a linear map. We define the maps 
\begin{equation*}
\varphi_{m}:\mathcal{A}\rightarrow B\left( \mathcal{H}_{m-1}^{\perp}\cap%
\mathcal{H}_{m}\right),\ \varphi_{m}(a):=\varphi(a)\restriction_{\mathcal{H}%
_{m-1}^{\perp}\cap\mathcal{H}_{m}} \ \mathit{for}\ m>1
\end{equation*}
and 
\begin{equation*}
\varphi_{1}:\mathcal{A}\rightarrow B(\mathcal{H}_{1}),\
\varphi_{1}(a):=\varphi(a)\restriction_{\mathcal{H}_{1}}.
\end{equation*}
\end{construction}

We denote by $\mathcal{K}_{1}:=\mathcal{H}_{1}$ and $\mathcal{K}_{m}:=%
\mathcal{H}_{m-1}^{\perp}\cap\mathcal{H}_{m}\ (m\geq 2)$ so that $\mathcal{H}%
=\overline{\bigoplus\limits_{m\geq 1}\mathcal{K}_{m}}$. \medskip

If $\varphi :\mathcal{A}\rightarrow C_{\mathcal{E}}^{\ast }(\mathcal{D})$ is
a local completly positive map, then the maps $\varphi _{m}:\mathcal{A}%
\rightarrow B(\mathcal{K}_{m}),\ m\geq 1$ are completely positive (see \cite[%
Remark 4.6]{Simon}).

If $\pi:\mathcal{A}\rightarrow C^*_{\mathcal{E}}(\mathcal{D})$ is a local
representation, then the maps $\pi_{m}:\mathcal{A}\rightarrow B(\mathcal{K}%
_{m}), \ m\geq 1$ are representations of $\mathcal{A}$ on Hilbert spaces $%
\mathcal{K}_{m}.$

The \textit{center} $\mathcal{Z}(C^*_{\mathcal{E}}(\mathcal{D}))$ of $C^*_{%
\mathcal{E}}(\mathcal{D})$ is the SOT-closure of the unital algebra
generated by the family of projections $\lbrace P_{\iota}
\rbrace_{\iota\in\Omega}$ (see \cite[Corollary 3.2]{Do12}).

\begin{definition}[\protect\cite{Simon}, Definition 3.8]
\label{ired} Let $\lbrace \mathcal{H}; \mathcal{E}; \mathcal{D} \rbrace$ be
a quantized (commutative) domain and let $\pi:\mathcal{A}\rightarrow C^*_{%
\mathcal{E}}(\mathcal{D})$ be a local representation of $\mathcal{A}$. We
say that $\pi:\mathcal{A}\rightarrow C^*_{\mathcal{E}}(\mathcal{D})$ is
irreducible if $\pi(\mathcal{A})^{\prime }=\mathcal{Z}\left( C^*_{\mathcal{E}%
}(\mathcal{D})\right)$, where $\pi(\mathcal{A})^{\prime *}_{\mathcal{E}}(%
\mathcal{D})\mid T\pi(a)=\pi(a)T \rbrace$ stands for the commutant of $\pi(%
\mathcal{A})$ in $C^*_{\mathcal{E}}(\mathcal{D}).$
\end{definition}

The collection of all $p_{m}$-boundary representations for $\mathcal{S}$ is
denoted by $Ch_{m}(\mathcal{S}).$

\begin{definition}
\label{Gama rep.} We say that a local representation $\pi :\mathcal{A}%
\rightarrow C_{\mathcal{E}}^{\ast }(\mathcal{D})$ for $\mathcal{S}$ is a 
\textit{$\Gamma $-boundary representation for $\mathcal{S}$} if for each $%
m\geq 1$, $\pi _{m}$ is a $p_{m}$-boundary representation for $\mathcal{S}$.
\end{definition}

We denote by $Ch_{\Gamma}(\mathcal{S})$ the set of all $\Gamma$-boundary
representations for $\mathcal{S}$. \medskip

\begin{remark}
If $\pi :\mathcal{A}\rightarrow C_{\mathcal{E}}^{\ast }(\mathcal{D})$ is a
local boundary representation for $\mathcal{S}$, then the following
statements do hold:

\begin{itemize}
\item[i)] For each $m\geq 1,$ by \cite[Theorem 3.14]{Simon}, $\pi _{m}$ is
an irreducible representation of $\mathcal{A}$

\item[ii)] If for each $m\geq 1,\varphi _{m}$ is a an $m$-$\mathcal{CP}$ map
which extends $\pi _{m}\upharpoonright _{\mathcal{S}}$, then the map $%
\varphi :$ $\mathcal{A}\rightarrow C_{\mathcal{E}}^{\ast }(\mathcal{D}),$
defined by $\varphi \left( a\right) \upharpoonright _{\mathcal{K}%
_{m}}=\varphi _{m}\left( a\right) $ for all $a\in \mathcal{A},$ is a local
completely positive map such that $\varphi \upharpoonright _{\mathcal{S}%
}=\pi \upharpoonright _{\mathcal{S}}.$ Consequently, since $\pi $ is a local
boundary representation, $\varphi =\pi .$ Therefore, $\varphi _{m}=\pi _{m}.$

\item[iii)] $\pi $ is a $\Gamma $-boundary representations for $\mathcal{S}$.
\end{itemize}
\end{remark}

Let $\{\pi _{m}\}_{m\geq 1}$ be a sequence of $p_{m}$-boundary
representations for $\mathcal{S}$, $\pi _{m}:\mathcal{A}\rightarrow B(%
\widetilde{\mathcal{K}}_{m})$. For each $m\geq 1$, let $\mathcal{H}_{m}=%
\widetilde{\mathcal{K}}_{1}\oplus \widetilde{\mathcal{K}}_{2}\oplus \ldots
\oplus \widetilde{\mathcal{K}}_{m}$. Therefore, $\{\mathcal{H};\mathcal{E};%
\mathcal{D}\}$ is a Fr\'{e}chet quantized domain, where $\mathcal{E}=\{%
\mathcal{H}_{m}\}_{m\geq 1},\ \mathcal{D}=\bigcup\limits_{m\geq 1}\mathcal{H}%
_{m}$ and $\mathcal{H}=\overline{\bigcup\limits_{m\geq 1}\mathcal{H}_{m}}=%
\overline{\mathcal{D}}$.

\begin{proposition}
\label{RR} There exists a bijective correspondence between $Ch_{\Gamma}(S)$
and $\lbrace (\sigma_{m})_{m\geq 1}\mid \sigma_{m}\in Ch_{m}(S) \rbrace$. 
\end{proposition}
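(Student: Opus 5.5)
We construct the two maps explicitly and check that they are mutually inverse.

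\emph{Forward map.} Given $\pi\in Ch_{\Gamma}(\mathcal{S})$, send it to the sequence $(\pi_{m})_{m\geq 1}$ of Construction \ref{pi_m}. By Definition \ref{Gama rep.}, each $\pi_{m}$ is a $p_{m}$-boundary representation, so $\pi_{m}\in Ch_{m}(\mathcal{S})$.

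\emph{Backward map.} Given $(\sigma_{m})_{m\geq 1}$ with $\sigma_{m}:\mathcal{A}\to B(\widetilde{\mathcal{K}}_{m})$ in $Ch_{m}(\mathcal{S})$, use the Fr\'{e}chet quantized domain built just before the statement: $\mathcal{H}_{m}=\widetilde{\mathcal{K}}_{1}\oplus\cdots\oplus\widetilde{\mathcal{K}}_{m}$ and $\mathcal{D}=\bigcup_{m}\mathcal{H}_{m}$. Define $\pi(a)$ on $\mathcal{D}$ by
\[
\pi(a)(\xi_{1}\oplus\cdots\oplus\xi_{m})=\sigma_{1}(a)\xi_{1}\oplus\cdots\oplus\sigma_{m}(a)\xi_{m}.
\]
The following checks are needed.
\begin{enumerate}
\item[(a)] $\pi(a)\in C^{*}_{\mathcal{E}}(\mathcal{D})$. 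Since $\pi(a)$ is block diagonal, it leaves both $\mathcal{H}_{m}$ and $\mathcal{H}_{m}^{\perp}\cap\mathcal{D}$ invariant. Its restriction to $\mathcal{H}_{m}$ is bounded with norm $\max_{k\leq m}\|\sigma_{k}(a)\|$.
\item[(b)] $\pi$ is a $*$-homomorphism. This is immediate, since each $\sigma_{k}$ is one.
\item[(c)] $\pi$ is a local representation. We need, for each $m$, some index $l$ with $\|\pi(a)\|_{m}\leq p_{l}(a)$. Because $\sigma_{k}$ is a $p_{k}$-boundary representation, it is $k$-$\mathcal{CP}$. Hence $\sigma_{k}$ vanishes on $\ker p_{k}$ and factors through the $C^{*}$-algebra $\mathcal{A}_{k}$, the completion of $\mathcal{A}/\ker p_{k}$. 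A $*$-homomorphism of $C^{*}$-algebras is contractive, so $\|\sigma_{k}(a)\|\leq p_{k}(a)$. Upward filtering then gives $\|\pi(a)\|_{m}=\max_{k\leq m}\|\sigma_{k}(a)\|\leq p_{m}(a)$.
\item[(d)] $\pi$ is $\Gamma$-boundary. By construction $\mathcal{K}_{m}=\mathcal{H}_{m-1}^{\perp}\cap\mathcal{H}_{m}=\widetilde{\mathcal{K}}_{m}$, so $\pi_{m}=\sigma_{m}\in Ch_{m}(\mathcal{S})$.
\end{enumerate}

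\emph{The maps are inverse.} Applying the forward map after the backward map returns $(\sigma_{m})$ by (d). In the other order, start from $\pi\in Ch_{\Gamma}(\mathcal{S})$ on $\{\mathcal{H};\mathcal{E};\mathcal{D}\}$. Every $T\in C^{*}_{\mathcal{E}}(\mathcal{D})$ has the diagonal form $T=\sum_{k}S_{k}TS_{k}$. Hence $\pi(a)$ is the block-diagonal operator with blocks $\pi_{k}(a)$ relative to $\mathcal{H}=\overline{\bigoplus_{k}\mathcal{K}_{k}}$. Since $\mathcal{H}_{m}=\mathcal{K}_{1}\oplus\cdots\oplus\mathcal{K}_{m}$, rebuilding from $(\pi_{m})$ gives exactly the same domain and the same operators. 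So the correspondence is bijective, with representations identified with their domains in the natural way.

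The main obstacle is step (c), together with the bookkeeping of domains needed for bijectivity. The delicate point in (c) is that an $m$-$\mathcal{CP}$ representation must satisfy $\|\sigma_{m}(a)\|\leq p_{m}(a)$. To settle it, I would use the fact that $\sigma_{m}$ kills $\ker p_{m}$: the induced map on $\mathcal{A}/\ker p_{m}$ is a unital completely positive map that is also multiplicative, so it extends to a representation of the $C^{*}$-completion $\mathcal{A}_{m}$, and representations of $C^{*}$-algebras are contractive. If Definition \ref{Gama rep.} is read as also requiring irreducibility in the sense of Definition \ref{ired}, I would not need to verify that separately. A $p_{m}$-boundary representation must, by definition, be an irreducible representation, so each block $\sigma_{m}$ is irreducible. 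The blocks should also be checked to be pairwise inequivalent. I would not pursue this, since Definition \ref{Gama rep.} only imposes the blockwise condition.
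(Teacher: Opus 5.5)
Your proposal is correct and follows the same route as the paper. The paper also sends $\pi$ to $(\pi_m)_{m\geq 1}$ and rebuilds $\pi$ blockwise by $\pi(a)\upharpoonright_{\mathcal{K}_m}=\sigma_m(a)$, but it only asserts what you verify in (a)--(d), in particular the estimate $\|\sigma_k(a)\|\leq p_k(a)$ that makes $\pi$ a local representation.

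One correction to your closing aside: irreducibility in the sense of Definition \ref{ired} needs no pairwise inequivalence of the blocks. Every operator in $C^{*}_{\mathcal{E}}(\mathcal{D})$ commutes with the $P_m$ and is therefore block diagonal, so intertwiners between different blocks never lie in the commutant.
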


\begin{proof}
Let $\pi\in Ch_{\Gamma}(\mathcal{S})$. Then, for each $m\geq 1$, the map $%
\pi_{m}$ associated to $\pi$ by Construction \ref{pi_m}, is a $p_{m}$%
-boundary representation for $\mathcal{S}$. Thus, we have a map $\Phi:
Ch_{\Gamma}(\mathcal{S})\rightarrow \lbrace (\sigma_{m})_{m\geq 1}\mid
\sigma_{m}\in Ch_{m}(S) \rbrace $, $\Phi(\pi)=\lbrace \pi_{m} \rbrace_{m\geq
1}.$

Let $\{\sigma _{m}\}_{m\geq 1}$ be a family of $p_{m}$-boundary
representations for $\mathcal{S}$. Then the map $\pi :\mathcal{A}\rightarrow
C_{\mathcal{E}}^{\ast }(\mathcal{D})$ defined by $\pi (a)\upharpoonright _{%
\mathcal{K}_{m}}=\sigma _{m}(a),(\forall )\ a\in \mathcal{A},(\forall )\
m\geq 1$ is a $\Gamma $-boundary representation for $\mathcal{S}$. Hence, $%
\pi _{m}=\sigma _{m},(\forall )\ m\geq 1$, and so, $\Phi (\pi )=\{\sigma
_{m}\}_{m\geq 1}$. Moreover, $\pi $ is unique with this property.
\end{proof}

In the following result, we show that $\Gamma$-boundary representations are
intrinsic invariants of Fr\'{e}chet local operator systems, thereby
generalizing the corresponding result for operator systems (see \cite[%
Theorem 4.2]{Ar69}).

\begin{proposition}
\label{P1} Let $\mathcal{A}_{1}$ and $\mathcal{A}_{2}$ be two unital locally 
$C^{\ast }$-algebras with the topologies given by the families of $C^{\ast }$%
-seminorms $\Gamma _{1}:=\{p_{m}\}_{m\geq 1}$ and $\Gamma
_{2}:=\{q_{m}\}_{m\geq 1}$, respectively. Let $\mathcal{S}_{1}\subseteq 
\mathcal{A}_{1}$ and $\mathcal{S}_{2}\subseteq \mathcal{A}_{2}$ be two Fr%
\'{e}chet local operator systems such that $\mathcal{S}_{1}$ generates $%
\mathcal{A}_{1}$ and $\mathcal{S}_{2}$ generates $\mathcal{A}_{2}$,
respectively. If $\varphi :\mathcal{S}_{1}\rightarrow \mathcal{S}_{2}$ is a
unital surjective local completely isometric map, then for each $\Gamma _{1}$%
-boundary representation $\pi _{1}:\mathcal{A}_{1}\rightarrow C_{\mathcal{E}%
}^{\ast }(\mathcal{D})$ for $\mathcal{S}_{1}$, there exists a $\Gamma _{2}$%
-boundary representation $\pi _{2}:\mathcal{A}_{2}\rightarrow C_{\mathcal{E}%
}^{\ast }(\mathcal{D})$ for $\mathcal{S}_{2}$ such that $\Phi _{2}\circ
\varphi =\Phi _{1}\upharpoonright _{\mathcal{S}_{1}}.$
\end{proposition}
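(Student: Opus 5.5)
The statement has typos: $\Phi_1,\Phi_2$ should be $\pi_1,\pi_2$, so the conclusion I aim for is $\pi_2\circ\varphi=\pi_1\restriction_{\mathcal{S}_1}$. The plan is to reduce everything to the Hilbert space level through Proposition \ref{RR}, apply Arveson's invariance theorem \cite[Theorem 4.2]{Ar69} block by block, and then glue the blocks back together.

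\textbf{Step 1 (a $C^*$-level isometry for each $m$).} Fix $m\geq 1$. Because $\varphi$ is local completely isometric, $q_m^n([\varphi(s_{ij})])=p_m^n([s_{ij}])$ for all $n$. Let $\mathcal{A}_{1,m}$ and $\mathcal{A}_{2,m}$ be the $C^*$-algebras obtained by completing $\mathcal{A}_1/\ker p_m$ and $\mathcal{A}_2/\ker q_m$. Let $\mathcal{S}_{1,m}$ and $\mathcal{S}_{2,m}$ be the images of $\mathcal{S}_1$ and $\mathcal{S}_2$ there. Then $\varphi$ induces a well-defined unital surjective complete isometry $\varphi_m:\mathcal{S}_{1,m}\to\mathcal{S}_{2,m}$. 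It is well defined because $p_m(s)=0$ forces $q_m(\varphi(s))=0$, and it is surjective because $\varphi$ is. Moreover, $\mathcal{S}_{i,m}$ generates a dense $*$-subalgebra of $\mathcal{A}_{i,m}$.

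\textbf{Step 2 (translating boundary representations).} The key identification is that a unital $m$-$\mathcal{CP}$ map on $\mathcal{A}_1$ is exactly a map that factors through $\mathcal{A}_1/\ker p_m$ and is completely positive, hence completely contractive, for the quotient norm. Such a map extends by continuity to a unital completely positive map on $\mathcal{A}_{1,m}$. Conversely, any unital completely positive map on $\mathcal{A}_{1,m}$ composed with the quotient map is $m$-$\mathcal{CP}$. Under this correspondence, the $p_m$-boundary representations for $\mathcal{S}_1$ are exactly the compositions of Arveson boundary representations of $\mathcal{S}_{1,m}$ with the quotient map. The same holds for $\mathcal{S}_2$ and $q_m$.

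\textbf{Step 3 (applying Arveson and gluing).}
\begin{itemize}
\item Take $\pi_1\in Ch_{\Gamma_1}(\mathcal{S}_1)$ and write $\{(\pi_1)_m\}_m$ for the sequence it determines through Proposition \ref{RR}.
\item For each $m$, $(\pi_1)_m$ gives a boundary representation $\rho_m$ of $\mathcal{S}_{1,m}$ on $\mathcal{K}_m$.
\item By \cite[Theorem 4.2]{Ar69} applied to $\varphi_m^{-1}$, there is a boundary representation $\tau_m$ of $\mathcal{S}_{2,m}$ on the same space $\mathcal{K}_m$ with $\tau_m\circ\varphi_m=\rho_m\restriction_{\mathcal{S}_{1,m}}$. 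Arveson's construction takes the unique completely positive extension of $\rho_m\circ\varphi_m^{-1}$, which is then shown to be an irreducible representation.
\item Composing $\tau_m$ with the quotient map $\mathcal{A}_2\to\mathcal{A}_{2,m}$ gives $\sigma_m\in Ch_m(\mathcal{S}_2)$ by Step 2.
\item Proposition \ref{RR} assembles $(\sigma_m)_m$ into $\pi_2\in Ch_{\Gamma_2}(\mathcal{S}_2)$ acting on the same quantized domain $\{\mathcal{H};\mathcal{E};\mathcal{D}\}$, with $(\pi_2)_m=\sigma_m$.
\end{itemize}
The intertwining is then checked blockwise: on each $\mathcal{K}_m$ we have $\pi_2(\varphi(s))\restriction_{\mathcal{K}_m}=\sigma_m(\varphi(s))=(\pi_1)_m(s)$. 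Since operators in $C^*_{\mathcal{E}}(\mathcal{D})$ are determined by their diagonal blocks $S_kTS_k$, this gives $\pi_2\circ\varphi=\pi_1\restriction_{\mathcal{S}_1}$.

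\textbf{Main obstacle.} I expect Step 2 to be the delicate point: showing that the $m$-$\mathcal{CP}$ conditions (positivity on $m$-positive matrices and vanishing on $m$-null matrices) really give complete contractivity in the quotient norm, so that the extension to $\mathcal{A}_{1,m}$ exists. I would first argue through the unital case: on the $C^*$-algebra $\mathcal{A}_1/\ker p_m$, $m$-positive elements coincide with positive ones modulo the kernel, so the induced map is completely positive on a dense unital $*$-subalgebra. It is therefore bounded by $\|\varphi(1)\|$, and it extends uniquely by continuity, giving the correspondence of extensions needed for uniqueness.

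A secondary point is that the local representation $\pi_2$ built in Step 3 must satisfy the continuity requirement $\|\pi_2(a)\|_m\le q_m(a)$. This holds because $\|\pi_2(a)\|_m=\max_{k\le m}\|\sigma_k(a)\|\le q_m(a)$, using that the seminorms are upward filtered.
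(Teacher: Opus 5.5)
Your argument is correct and follows the same route as the paper. Both work on the blocks $\mathcal{K}_m$: for each $m$ one obtains a $q_m$-boundary representation $\sigma_m$ of $\mathcal{A}_2$ on $\mathcal{K}_m$ with $\sigma_m\circ\varphi=(\pi_1)_m\upharpoonright_{\mathcal{S}_1}$, and these are glued as in Proposition \ref{RR} (the paper cites the proof of \cite[Proposition 5.5]{Simon} for the gluing). The one difference is that the paper takes the blockwise step directly from \cite[Proposition 3.1]{Jo25}, whereas your Steps 1--2 re-derive it from Arveson's invariance theorem via the quotients $\mathcal{A}_1/\ker p_m$ and $\mathcal{A}_2/\ker q_m$; these are already $C^*$-algebras, so no completion is needed, and that completeness is exactly what makes your boundedness argument for $m$-$\mathcal{CP}$ maps valid.
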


\begin{proof}
Let $\pi _{1}:\mathcal{A}_{1}\rightarrow C_{\mathcal{E}}^{\ast }(\mathcal{D}%
) $ be a $\Gamma _{1}$-boundary representation for $\mathcal{S}_{1}$. Then
for all $m\geq 1$, $\left( \pi _{1}\right) _{m}:\mathcal{A}_{1}\rightarrow B(%
\mathcal{K}_{m})$ is a $p_{m}$-boundary representation for $\mathcal{S}_{1}$
on the Hilbert space $\mathcal{K}_{m}.$ By \cite[Proposition 3.1]{Jo25},
there exists a $q_{m}$-boundary representation $\left( \pi _{2}\right) _{m}:%
\mathcal{A}_{2}\rightarrow B(\mathcal{K}_{m})$ for $\mathcal{S}_{2}$ on
Hilbert space $\mathcal{K}_{m}$ such that $\left( \pi _{2}\right) _{m}\circ
\varphi =\left( \pi _{1}\right) _{m}\upharpoonright _{\mathcal{S}_{1}}.$ By
the proof of \cite[Proposition 5.5]{Simon}, it follows that there exists a $%
\Gamma _{2}$-boundary representation $\pi _{2}:\mathcal{A}_{2}\rightarrow C_{%
\mathcal{E}}^{\ast }(\mathcal{D})$ for $\mathcal{S}_{2}$, defined by 
\begin{equation*}
\pi _{2}(a)\upharpoonright _{\mathcal{H}_{m}}:=\bigoplus\limits_{k\leq
m}\left( \pi _{2}\right) _{k}(a),\ (\forall )\ a\in \mathcal{A}_{2},\
(\forall )\ m\geq 1.
\end{equation*}%
Therefore, it is clearly that $\pi _{2}\circ \varphi =\pi
_{1}\upharpoonright _{\mathcal{S}_{1}}$.
\end{proof}

W. Arveson \cite{Ar08} proved that for every separable operator system $%
\mathcal{S}$, the Shilov boundary ideal of $\mathcal{S}$ is given by the
intersection of the kernels of all boundary representations of $\mathcal{S}$%
. In \cite[Theorem 3.16]{Jo25}, by analogy with the normed case, it was
shown that the Shilov boundary ideal for a separable local operator system
can be characterized as the intersection of the kernels of all $p_{\lambda}$%
-boundary representations of $\mathcal{S}$. In the following, we show that,
for a unital separable Fr\'{e}chet local operator system $\mathcal{S}$, the
Shilov boundary ideal coincides with the intersection of the kernels of all $%
\Gamma$-boundary representations of $S$.

\begin{lemma}
\label{L1} Let $\pi:\mathcal{A}\rightarrow C^*_{\mathcal{E}}(\mathcal{D})$
be a local representation. Then 
\begin{equation*}
\ker\pi=\bigcap\limits_{m\geq 1}\ker\pi_{m}.
\end{equation*}
\end{lemma}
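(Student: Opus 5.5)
The plan is to prove the two inclusions directly, using only the block-diagonal structure of operators in $C^*_{\mathcal{E}}(\mathcal{D})$ over a Fréchet quantized domain. Throughout, $\pi_m$ is as in Construction \ref{pi_m}, and $\mathcal{K}_1=\mathcal{H}_1$, $\mathcal{K}_m=\mathcal{H}_{m-1}^{\perp}\cap\mathcal{H}_m$ for $m\geq 2$.

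For the inclusion $\ker\pi\subseteq\bigcap_{m\geq 1}\ker\pi_m$, let $\pi(a)=0$. Then $\pi_m(a)=\pi(a)\restriction_{\mathcal{K}_m}=0$ for every $m$, since $\mathcal{K}_m\subseteq\mathcal{H}_m\subseteq\mathcal{D}$.

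For the reverse inclusion, let $a\in\mathcal{A}$ with $\pi_m(a)=0$ for all $m\geq 1$. We must show $\pi(a)\xi=0$ for every $\xi\in\mathcal{D}$. Fix $\xi\in\mathcal{D}$; then $\xi\in\mathcal{H}_m$ for some $m$. The key structural point is the orthogonal decomposition
\[
\mathcal{H}_m=\mathcal{K}_1\oplus\mathcal{K}_2\oplus\cdots\oplus\mathcal{K}_m .
\]
This holds because the $\mathcal{H}_k$ increase, so $P_{k-1}\leq P_k$ and the $S_k=(I-P_{k-1})P_k$ are mutually orthogonal projections with $\sum_{k\leq m}S_k=P_m$. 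Writing $\xi=\sum_{k\leq m}S_k\xi$ with $S_k\xi\in\mathcal{K}_k\subseteq\mathcal{D}$, linearity of $\pi(a)$ on $\mathcal{D}$ gives
\[
\pi(a)\xi=\sum_{k\leq m}\pi(a)S_k\xi=\sum_{k\leq m}\pi_k(a)S_k\xi=0 .
\]
Hence $\pi(a)=0$ on $\mathcal{D}$, so $a\in\ker\pi$.

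The only point needing care is that $\pi_k(a)$ genuinely maps $\mathcal{K}_k$ into itself, so that it is a well-defined element of $B(\mathcal{K}_k)$ and the computation above is legitimate. This follows from the definition of $C^*_{\mathcal{E}}(\mathcal{D})$. Any $T\in C^*_{\mathcal{E}}(\mathcal{D})$ leaves $\mathcal{H}_k$ invariant and also leaves $\mathcal{H}_{k-1}^{\perp}\cap\mathcal{D}$ invariant. Therefore $T$ maps $\mathcal{K}_k=\mathcal{H}_{k-1}^{\perp}\cap\mathcal{H}_k$ into $\mathcal{H}_{k-1}^{\perp}\cap\mathcal{H}_k=\mathcal{K}_k$. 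Equivalently, this is the diagonal representation $T=\sum_k S_kTS_k$ recalled earlier. Once this is in place, the argument is just the finite decomposition of $\mathcal{H}_m$ above, and no limiting or closure argument is needed because every vector of $\mathcal{D}$ lies in some $\mathcal{H}_m$.
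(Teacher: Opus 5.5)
Your proof is correct and follows the same two-inclusion argument as the paper. Where the paper simply asserts that $\pi(a)$ is determined by the family $\{\pi_m(a)\}_{m\geq 1}$, you supply the justification: the finite orthogonal decomposition $\mathcal{H}_m=\mathcal{K}_1\oplus\cdots\oplus\mathcal{K}_m$, together with the fact that every vector of $\mathcal{D}$ lies in some $\mathcal{H}_m$, so no closure argument is needed.
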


\begin{proof}
If $a\in\ker\pi$, then $\pi(a)=0$ on $\mathcal{D}$. Hence $%
\pi_{m}(a)=\pi(a)\restriction_{\mathcal{K}_{m}}=0$ for every $m\geq 1$.
Therefore $a\in\ker\pi_{m}$, for every $m\geq 1.$

Conversely, if $a\in \bigcap\limits_{m\geq 1}\ker\pi_{m}$, then $%
\pi_{m}(a)=0 $ for every $m\geq 1$. Since $\pi(a)$ is determined by the
family of representations $\lbrace \pi_{m}(a) \rbrace_{m\geq 1}$ on the
Hilbert space $\mathcal{K}_{m}$, it follows that $\pi(a)=0.$ Therefore $%
a\in\ker\pi.$
\end{proof}

\begin{theorem}
\label{Theorem} Let $\mathcal{A}$ be a unital separable Fr\'{e}chet locally $%
C^*$-algebra whose topology is given by the family of $C^*$-seminorms $%
\lbrace p_{m} \rbrace_{m\geq 1}$ and let $\mathcal{S}\subseteq\mathcal{A}$
be a local operator system which generates $\mathcal{A}$. Then 
\begin{equation*}
\mathcal{J}:=\bigcap\limits_{\pi\in Ch_{\Gamma}(\mathcal{S})}\ker\pi
\end{equation*}
is the Shilov boundary ideal for $\mathcal{S}$.
\end{theorem}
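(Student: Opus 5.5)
The plan is to reduce the statement to the known characterization \cite[Theorem 3.16]{Jo25} for separable local operator systems. That result identifies the Shilov boundary ideal for $\mathcal{S}$ as
\begin{equation*}
\mathcal{J}_{0}:=\bigcap_{m\geq 1}\ \bigcap_{\sigma\in Ch_{m}(\mathcal{S})}\ker\sigma ,
\end{equation*}
the intersection of the kernels of all $p_{m}$-boundary representations. Here $\Lambda=\mathbb{N}$ because $\mathcal{A}$ is Fr\'{e}chet. It therefore suffices to prove $\mathcal{J}=\mathcal{J}_{0}$. Both inclusions go through Lemma \ref{L1} and the bijection of Proposition \ref{RR}.

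For $\mathcal{J}_{0}\subseteq\mathcal{J}$, take $\pi\in Ch_{\Gamma}(\mathcal{S})$. By Definition \ref{Gama rep.}, each $\pi_{m}$ lies in $Ch_{m}(\mathcal{S})$, so $\mathcal{J}_{0}\subseteq\ker\pi_{m}$ for every $m$. Lemma \ref{L1} then gives $\mathcal{J}_{0}\subseteq\bigcap_{m}\ker\pi_{m}=\ker\pi$. Intersecting over all $\pi$ yields $\mathcal{J}_{0}\subseteq\mathcal{J}$.

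For $\mathcal{J}\subseteq\mathcal{J}_{0}$, fix $m_{0}$ and $\sigma\in Ch_{m_{0}}(\mathcal{S})$; we want $\mathcal{J}\subseteq\ker\sigma$. We build a sequence $(\sigma_{m})_{m\geq1}$ with $\sigma_{m}\in Ch_{m}(\mathcal{S})$ and $\sigma_{m_{0}}=\sigma$. Proposition \ref{RR} then supplies $\pi\in Ch_{\Gamma}(\mathcal{S})$ with $\pi_{m_{0}}=\sigma$, and Lemma \ref{L1} gives $\mathcal{J}\subseteq\ker\pi\subseteq\ker\pi_{m_{0}}=\ker\sigma$. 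Intersecting over all $\sigma$ and $m_{0}$ gives $\mathcal{J}\subseteq\mathcal{J}_{0}$.

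The main obstacle is filling in the other coordinates, i.e.\ showing $Ch_{m}(\mathcal{S})\neq\emptyset$ for every $m\neq m_{0}$. Separability should give this through the existence part of \cite{Jo25}, the analogue of Arveson's existence theorem: $\mathcal{S}$ with the seminorm $p_{m}$ yields an operator system $\mathcal{S}_{m}$ in the $C^{*}$-algebra $\mathcal{A}_{m}=\mathcal{A}/\ker p_{m}$, and $\mathcal{S}_{m}$ is separable. An $m$-$\mathcal{CP}$ map is exactly a completely positive map factoring through $\mathcal{A}_{m}$. Hence $Ch_{m}(\mathcal{S})$ corresponds to the boundary representations of $\mathcal{S}_{m}$ composed with the quotient map, and this set is nonempty by Arveson \cite{Ar08}. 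If $Ch_{m}(\mathcal{S})$ could be empty for some $m$, then $Ch_{\Gamma}(\mathcal{S})$ would be empty, $\mathcal{J}$ would equal $\mathcal{A}$, and the theorem would fail. So this nonemptiness, inherited from Arveson's result applied to each separable $\mathcal{S}_{m}$, is the step that must be checked carefully.

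Finally, $\mathcal{J}=\mathcal{J}_{0}$ is a closed two-sided $*$-ideal, being an intersection of kernels of continuous representations, and \cite[Theorem 3.16]{Jo25} then shows it is the Shilov boundary ideal for $\mathcal{S}$.
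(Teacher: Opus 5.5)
Your proposal is correct and follows the paper's route: the paper also combines Lemma \ref{L1} with the bijection of Proposition \ref{RR} to get $\bigcap_{\pi\in Ch_{\Gamma}(\mathcal{S})}\ker\pi=\bigcap_{m\geq 1}\bigcap_{\sigma\in Ch_{m}(\mathcal{S})}\ker\sigma$, and then invokes \cite[Theorem 3.16]{Jo25}. Your explicit two-inclusion argument, and especially your check that each $Ch_{m}(\mathcal{S})$ is nonempty (via Arveson's theorem for the separable operator systems $\mathcal{S}_{m}\subseteq\mathcal{A}/\ker p_{m}$), makes precise a point the paper uses only implicitly when it passes through Proposition \ref{RR}.
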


\begin{proof}

By Lemma \ref{L1} and Remark \ref{RR}, (ii) we obtain the following
relation: 
\begin{equation}  \label{relation 1}
\bigcap\limits_{\pi\in Ch_{\Gamma}(\mathcal{S})}\ker\pi=\bigcap\limits_{m%
\geq 1}\bigcap\limits_{\pi_{m}\in Ch_{m}(\mathcal{S})}\ker\pi_{m}.
\end{equation}

On the other hand, by \cite[Theorem 3.16]{Jo25}, $\bigcap\limits_{m\geq
1}\bigcap\limits_{\pi_{m}\in Ch_{m}(\mathcal{S})}\ker\pi_{m}$ is the Shilov
boundary ideal for $\mathcal{S}$. Consequently, $\mathcal{J}$ is the Shilov
boundary ideal for $\mathcal{S}$.
\end{proof}

\subsection*{Funding}

The authors declare that no funds, grants, or other support were received
during the preparation of this manuscript.

\subsection*{Data Availability}

This paper has no associated data.

\subsection*{Declarations}

\subsection*{Conflicts of interests}

The author has no relevant financial or non-financial interests to disclose.

\end{document}